\newcommand{\doi}[1]{\href{http://dx.doi.org/#1}{\texttt{doi:#1}}}
\renewcommand{\baselinestretch}{1.25}
\theoremstyle{plain}
\newtheorem{theorem}{Theorem}[section]
\newtheorem{lemma}[theorem]{Lemma}
\theoremstyle{definition}
\newcommand{\thmlabel}[1]{\label{thm:#1}}
\newcommand{\thmref}[1]{Theorem~\ref{thm:#1}}
\newcommand{\twothmref}[2]{Theorems~\ref{thm:#1} and \ref{thm:#2}}
\newcommand{\lemlabel}[1]{\label{lem:#1}}
\newcommand{\lemref}[1]{Lemma~\ref{lem:#1}}
\newcommand{\seclabel}[1]{\label{sec:#1}}
\newcommand{\secref}[1]{Section~\ref{sec:#1}}
\newcommand{\figlabel}[1]{\label{fig:#1}}
\newcommand{\figref}[1]{Figure~\ref{fig:#1}}
\newcommand{\CP}[2]{\ensuremath{#1{\,\square\,}#2}}
\newcommand{\X}{\ensuremath{\mathcal{X}}}
\newcommand{\A}{\ensuremath{\mathcal{A}}}
\newcommand{\B}{\ensuremath{\mathcal{B}}}
\newcommand{\PP}{\ensuremath{\mathcal{P}}}
\newcommand{\Q}{\ensuremath{\mathcal{Q}}}
\newcommand{\C}[1]{\ensuremath{P\langle#1\rangle}}
\newcommand{\BB}[1]{\ensuremath{\B\langle#1\rangle}}
\newcommand{\ceil}[1]{\ensuremath{\left\lceil#1\right\rceil}}
\begin{document}

\title[Grid-like-minors]{Polynomial treewidth forces\\ a large grid-like-minor}

\author{Bruce~A.~Reed}
\address{\newline Canada Research Chair in Graph Theory
\newline School of Computer Science
\newline McGill University
\newline Montr\'eal, Canada
\bigskip
\newline Laboratoire I3S
\newline Centre National de la Recherche Scientifique
\newline Sophia-Antipolis, France}
\email{breed@cs.mcgill.ca}

\author{David~R.~Wood}
\thanks{D.W.\ is supported by a QEII Research Fellowship from the Australian Research Council.}
\address{\newline Department of Mathematics and Statistics
\newline The University of Melbourne
\newline Melbourne, Australia}
\email{woodd@unimelb.edu.au}

\thanks{\textbf{MSC Classification}: graph minors 05C83.}

\date{\today}

\begin{abstract}
Robertson and Seymour proved that every graph with sufficiently large treewidth contains a large grid minor. However, the best known bound on the treewidth that forces an $\ell\times\ell$ grid minor is exponential in $\ell$. It is unknown whether polynomial treewidth suffices. We prove a result in this direction. A \emph{grid-like-minor of order} $\ell$ in a graph $G$ is a set of paths in $G$ whose intersection graph is bipartite and contains a $K_{\ell}$-minor. For example, the rows and columns of the $\ell\times\ell$ grid are a grid-like-minor of order $\ell+1$. We prove that polynomial treewidth forces a large grid-like-minor. In particular, every graph with treewidth at least $c\ell^4\sqrt{\log\ell}$ has a grid-like-minor of order $\ell$. As an application of this result, we prove that the cartesian product \CP{G}{K_2} contains a $K_{\ell}$-minor whenever $G$ has treewidth at least $c\ell^4\sqrt{\log\ell}$.
\end{abstract}

\maketitle

\section{Introduction}
\seclabel{Intro}

A central theorem in Robertson and Seymour's theory of graph minors states
that the grid\footnote{The $\ell\times\ell$ grid is the planar graph with
  vertex set $[\ell]\times[\ell]$, where vertices $(x,y)$ and $(p,q)$ are
  adjacent whenever $|x-p|+|y-q|=1$.} to  is a canonical witness for a graph to have large treewidth, in the sense that the $\ell\times \ell$ grid has treewidth $\ell$, and every graph with sufficiently large treewidth contains an $\ell\times\ell$ grid minor \citep{RS-GraphMinorsV-JCTB86}. See \citep{RST-JCTB94,DJGT-JCTB99,Reed97} for alternative proofs. The following theorem is the best-known explicit bound. See \citep{DHK-Algo09,DemHaj-Comb08} for better bounds under additional assumptions. 

\begin{theorem}[\citet*{RST-JCTB94}]\thmlabel{GridMinor} 
Every graph with treewidth at least $20^{2\ell^5}$ contains an $\ell\times \ell$ grid minor.
\end{theorem}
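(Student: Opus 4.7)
The plan is to follow the standard strategy based on the duality between treewidth and highly linked substructures, and to build the $\ell\times\ell$ grid minor row by row. By the Seymour--Thomas duality theorem, treewidth at least $20^{2\ell^5}$ implies the existence of a bramble of comparable order, and equivalently a \emph{highly linked} (well-linked) set $W \subseteq V(G)$ of size roughly $20^{2\ell^5}$: for any two equal-size subsets $A, B \subseteq W$, Menger's theorem provides $|A|$ internally disjoint $A$--$B$ paths.

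The main construction is inductive. The inductive hypothesis is that after $i$ rounds we have assembled an $i \times m_i$ partial grid minor with rows $R_1,\dots,R_i$ and columns $C_1,\dots,C_{m_i}$, such that $m_i$ is still large and the remainder of the graph still supports a bramble of high order. To extend from $i$ to $i+1$ rows, I would apply the residual linkage to produce a long path $P$ in the remaining graph that visits many of the columns; then extract, via a Ramsey / Erd\H{o}s--Szekeres style monotone sub-linkage argument, a sub-path that traverses at least $m_{i+1}$ of the columns in correct consecutive order. This becomes $R_{i+1}$, and the visited columns become $C_1,\dots,C_{m_{i+1}}$.

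Each extension step loses at most a factor polynomial in $\ell$ from the current bramble order, and we need $\ell$ rounds, so iterating this loss yields a final bound of the form $\exp(\mathrm{poly}(\ell))$. A careful bookkeeping of the polynomial degrees accumulated at each stage yields the quoted $20^{2\ell^5}$.

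The chief obstacle is the extension step: a generic path through the residual graph may revisit the current columns in an arbitrary order, and isolating a long sub-path that traverses many of them in the correct consecutive order is what forces the exponential blow-up in the required treewidth. This Ramsey-type order-extraction, combined with the bookkeeping needed to preserve a sufficiently large bramble in the residual subgraph across all $\ell$ iterations, is the technical heart of the argument and is where the non-polynomial dependence on $\ell$ is unavoidable with these techniques.
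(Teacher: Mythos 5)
This statement is not proved in the paper at all: it is quoted verbatim from Robertson, Seymour and Thomas (\emph{Quickly excluding a planar graph}, JCTB 1994) and used as a black box, so there is no in-paper argument to compare yours against. Judged on its own terms, your proposal is a plan rather than a proof. You correctly identify the standard ingredients (treewidth--bramble duality, well-linked sets, Menger), but every step that carries actual content is asserted rather than established. In particular, the extension step --- ``produce a long path $P$ that visits many of the columns; then extract, via a Ramsey / Erd\H{o}s--Szekeres style monotone sub-linkage argument, a sub-path that traverses at least $m_{i+1}$ of the columns in correct consecutive order'' --- is precisely the theorem's entire difficulty, and you acknowledge as much without supplying it. Nothing in the proposal explains why such a path exists in the residual graph, how the new row is kept disjoint from the existing rows and from the columns except at the intended crossings, how the columns are re-routed or trimmed after each round, or why the residual graph retains a bramble of sufficient order after the deletions. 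The closing claim that ``careful bookkeeping of the polynomial degrees accumulated at each stage yields the quoted $20^{2\ell^5}$'' cannot be accepted: that constant arises from a specific chain of quantitative lemmas in the RST paper, and a per-round loss that is merely ``polynomial in $\ell$'' compounded over $\ell$ rounds would give a bound of the form $\ell^{O(\ell)}$, not obviously the stated one; conversely, an Erd\H{o}s--Szekeres-type extraction typically costs a square root per application, and iterating that naively gives a doubly exponential loss. The actual known proofs (RST via tangles and meshes; Diestel--Jensen--Gorbunov--Thomassen via an externally connected set and two crossing path families with a counting argument) are structured globally rather than row by row precisely to avoid this compounding.

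If your goal is to justify the theorem within this paper, the honest course is to cite it, as the authors do. If your goal is to reprove it, you need to fill in the extension step as a precise lemma with explicit quantitative hypotheses and conclusion, and then actually track the constants through all $\ell$ iterations; as written, the proposal contains no verifiable mathematics beyond the (standard, and correctly stated) duality between treewidth and brambles.
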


\citet{RST-JCTB94} also proved that certain random graphs have treewidth proportional to  $\ell^2\log\ell$, yet do not contain an $\ell\times\ell$ grid minor. This is the best known lower bound on the function in \thmref{GridMinor}. Thus it is open whether polynomial treewidth forces a large grid minor. This question is not only of theoretic interest---for example, it has direct bearing on certain algorithmic questions \citep{DemHaj-EuJC07}. In this paper we prove that polynomial treewidth forces a large `grid-like-minor'.

A \emph{grid-like-minor of order} $\ell$ in a graph $G$ is a set \PP\
of paths in $G$, such that the intersection graph\footnote{The
  intersection graph of a set $X$, whose elements are sets, has vertex
  set $X$ where distinct vertices are adjacent whenever the corresponding
  sets have a non-empty intersection.} of \PP\ is bipartite and contains a $K_{\ell}$-minor. Observe that the intersection graph of the rows and columns of the $\ell\times\ell$ grid is the complete bipartite graph $K_{\ell,\ell}$, which contains a $K_{\ell+1}$-minor (formed by contracting a matching of $\ell-1$ edges). Hence, the $\ell\times\ell$ grid contains a grid-like-minor of order $\ell+1$. The following is our main result.

\begin{theorem}
\thmlabel{GridLikeMinor}
Every graph with treewidth at least $c\ell^4\sqrt{\log \ell}$ contains a grid-like-minor of order $\ell$, for some constant $c$. Conversely, every graph that contains a grid-like-minor of order $\ell$ has treewidth at least $\ceil{\frac{\ell}{2}}-1$.
\end{theorem}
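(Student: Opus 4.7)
\emph{Plan.} I will handle the two directions separately. The converse is elementary via the bramble characterisation of treewidth, while the forward direction will combine a structural consequence of large treewidth with a density theorem for $K_\ell$-minors.

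For the converse, suppose $\PP$ is a grid-like-minor of order $\ell$ in $G$, with bipartite intersection graph $H$. Partition $\PP=\A\cup\B$ so that within each part the paths are pairwise vertex-disjoint in $G$. Let $\{H_1,\ldots,H_\ell\}$ be the branch sets of a $K_\ell$-minor in $H$, and let $G_i$ be the union of the paths in $H_i$, viewed as a subgraph of $G$. Connectedness of each $H_i$ in the intersection graph translates to connectedness of $G_i$ in $G$: consecutive paths along an intersection-graph walk share a vertex of $G$. Moreover, two touching branch sets $H_i,H_j$ provide an edge $PP'$ of $H$ with $P\in H_i$ and $P'\in H_j$, so $P$ and $P'$ share a vertex in $G$, which lies in both $G_i$ and $G_j$. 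Thus $\{G_1,\ldots,G_\ell\}$ is a bramble in $G$. Since any vertex of $G$ lies in at most one path of $\A$ and at most one path of $\B$, it lies in at most two of the $G_i$, so every hitting set for this bramble has size at least $\ceil{\ell/2}$. Bramble-treewidth duality (Seymour and Thomas) then yields treewidth at least $\ceil{\ell/2}-1$.

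For the forward direction I would invoke the theorem of Kostochka and Thomason that a graph of average degree $\Omega(\ell\sqrt{\log\ell})$ contains a $K_\ell$-minor. The aim is to extract from $G$ two families $\A,\B$ of paths, each internally pairwise disjoint, whose bipartite intersection graph has average degree at least $c'\ell\sqrt{\log\ell}$. Bipartiteness of the intersection graph is then automatic, Kostochka--Thomason applied inside it produces the required $K_\ell$-minor, and $\A\cup\B$ is a grid-like-minor of order $\ell$.

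The principal obstacle is building the two path families with enough mutual intersections. A natural route is via a well-linked set, or equivalently a bramble of large order, which large treewidth is known to supply: one chooses two systems of vertex-disjoint paths joining designated halves of such a structure and lower-bounds the number of crossings by a Menger-type double-counting argument. The bound $c\ell^4\sqrt{\log\ell}$ decomposes naturally as roughly $\ell^3$ paths per family with an average of $\ell\sqrt{\log\ell}$ crossings each; combined with the (at worst quadratic) gap between the size of a well-linked set and the treewidth needed to guarantee it, this accounts for the fourth power of $\ell$, and the $\sqrt{\log\ell}$ factor is inherited directly from Kostochka--Thomason. Arranging the two systems so that the paths within each family remain pairwise disjoint while still achieving the target intersection density is where the bulk of the technical work will lie.
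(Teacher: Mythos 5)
Your proof of the converse is correct and is essentially the argument in the paper: form a bramble from the branch sets of the $K_\ell$-minor in the intersection graph, observe that since that graph is bipartite (hence triangle-free) each vertex of $G$ lies in at most two bramble elements, and apply bramble--treewidth duality.

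Your plan for the forward direction, however, is a genuinely different route from the paper's, and it is not carried out; the step you flag as ``where the bulk of the technical work will lie'' is exactly the part that does not obviously work. You aim to build two internally disjoint families $\A,\B$ whose bipartite intersection graph is \emph{dense}, and then obtain $K_\ell$ from Kostochka--Thomason by average degree. There is no indication in the paper (or, to my knowledge, elsewhere) that a well-linked set in a graph of treewidth $\mathrm{poly}(\ell)$ yields two path systems with that kind of mutual crossing density, and your proposed ``Menger-type double counting'' would at best control the number of $\A$--$\B$ crossings, not how they distribute among pairs, which is what average degree of the intersection graph actually needs. The paper sidesteps this difficulty entirely. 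It takes one path $P$ hitting the whole bramble, chops it into $\ell$ segments $P_1,\dots,P_\ell$ each hitting a sub-bramble of order $k$, and by Menger gets $k$ disjoint $P_i$--$P_j$ paths $\Q_{i,j}$ for every pair. It then uses Kostochka--Thomason in the \emph{contrapositive}: either some $\Q_{i,j}\cup\Q_{a,b}$ already has a $K_\ell$-minor in its bipartite intersection graph (done), or every such pair of families is $d(\ell)$-degenerate; in the latter case the Lov\'asz Local Lemma selects one path $Q_{i,j}$ from each $\Q_{i,j}$ so that the $Q_{i,j}$'s are pairwise disjoint. The resulting intersection graph of $\{P_i\}\cup\{Q_{i,j}\}$ is the 1-subdivision of $K_\ell$ --- a \emph{sparse} bipartite graph containing $K_\ell$ as a minor by structure, not by density. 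That dichotomy plus the LLL selection are the missing ideas in your sketch; without them, the forward direction as you have outlined it does not go through.
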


\thmref{GridLikeMinor} proves that grid-like-minors serve as a canonical witness for a graph to have large treewidth, just like grid minors. The advantage of grid-like-minors is that a polynomial bound on treewidth suffices. The disadvantage of grid-like-minors is that they are a broader structure than grid minors (but not as broad as brambles; see \secref{Background}). 

\thmref{GridLikeMinor} has an interesting corollary concerning the cartesian product \CP{G}{K_2}. This graph consists of two copies of $G$ with an edge between corresponding vertices in the two copies. Motivated by Hadwiger's Conjecture for cartesian products, the second author \citep{Wood-ProductMinor} showed that the maximum order of a complete minor in \CP{G}{K_2} is tied to the treewidth of $G$. In particular, if $G$ has treewidth at most $\ell$, then \CP{G}{K_2} has treewidth at most $2\ell+1$ and thus contains no $K_{2\ell+3}$-minor. Conversely, if $G$ has treewidth at least $2^{4\ell^4}$, then $\CP{G}{K_2}$ contains a $K_\ell$-minor. The proof of the latter result is based on the version of \thmref{GridMinor} due to \citet*{DJGT-JCTB99}. The following theorem is a significant improvement.

\begin{theorem}
\thmlabel{CartProd}
If a graph $G$ has treewidth at least $c\ell^4\sqrt{\log \ell}$, then $\CP{G}{K_2}$ contains a $K_\ell$-minor, for some constant $c$.
\end{theorem}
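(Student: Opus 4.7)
The plan is to derive \thmref{CartProd} as a direct consequence of \thmref{GridLikeMinor} by showing that a grid-like-minor of order $\ell$ in $G$ can be unfolded across the two copies of $G$ in \CP{G}{K_2} to produce a $K_\ell$-minor. The bipartite structure of the intersection graph is precisely what makes this unfolding work without collisions.

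First, apply \thmref{GridLikeMinor} to $G$: since $G$ has treewidth at least $c\ell^4\sqrt{\log\ell}$, there exists a set of paths $\PP$ in $G$ whose intersection graph $H$ is bipartite and contains a $K_\ell$-minor. Fix a bipartition $\PP=\A\cup\B$ witnessing that $H$ is bipartite, so that paths in $\A$ are pairwise vertex-disjoint in $G$, paths in $\B$ are pairwise vertex-disjoint in $G$, and edges of $H$ go only between $\A$ and $\B$.

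Now I would build a $K_\ell$-minor model in \CP{G}{K_2} as follows. Write the vertex set of \CP{G}{K_2} as $V(G)\times\{0,1\}$, with edges $(u,i)(v,i)$ whenever $uv\in E(G)$ and $(v,0)(v,1)$ for each $v\in V(G)$. For each $P\in\A$, let $P_0$ denote the copy of $P$ in $V(G)\times\{0\}$; for each $Q\in\B$, let $Q_1$ denote the copy of $Q$ in $V(G)\times\{1\}$. Each $P_0$ and each $Q_1$ is a (connected) path in \CP{G}{K_2}. The disjointness properties of $\A$ and $\B$ imply that the sets $\{P_0:P\in\A\}\cup\{Q_1:Q\in\B\}$ are pairwise vertex-disjoint: paths coming from $\A$ live in the $0$-layer and are disjoint there, paths from $\B$ live in the $1$-layer and are disjoint there, and the two layers are disjoint from each other. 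Finally, whenever $P\in\A$ and $Q\in\B$ share a vertex $v\in V(G)$, the vertices $(v,0)\in P_0$ and $(v,1)\in Q_1$ are joined by the matching edge of \CP{G}{K_2}, providing an edge between the branch sets $P_0$ and $Q_1$.

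Consequently, contracting each $P_0$ and each $Q_1$ in \CP{G}{K_2} yields a graph that contains $H$ as a subgraph, so \CP{G}{K_2} contains $H$, and hence $K_\ell$, as a minor. There is no real obstacle here beyond correctly exploiting the bipartition: the entire argument is essentially a one-page observation whose only subtlety is noting that without the bipartite condition on the intersection graph, two overlapping paths in $\A$ would force colliding branch sets in the $0$-layer. Thus \thmref{GridLikeMinor} does all the work, and \thmref{CartProd} follows immediately with the same constant $c$.
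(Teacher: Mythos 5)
Your argument is correct and matches the paper's proof exactly: the paper isolates your construction as \lemref{MinorCartProd}, contracting the paths of one bipartition class in the first copy of $G$ and the other class in the second copy, with the matching edges of \CP{G}{K_2} providing the required adjacencies. Your explicit observation that each class consists of pairwise vertex-disjoint paths (because $H$ is an intersection graph with no edges inside a part) is the right justification that the branch sets are disjoint.
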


\section{Background}
\seclabel{Background}

All graphs considered in this paper are undirected, simple, and finite. For undefined terminology, see \citep{Diestel00}. A graph $H$ is a \emph{minor} of a graph $G$ if a graph isomorphic to $H$ can be obtained from a subgraph of $G$ by contracting edges. A graph $G$ is \emph{$d$-degenerate} if every subgraph of $G$ has a vertex of degree at most $d$. \citet{Mader67} proved that every graph with no $K_\ell$-minor is $2^{\ell-2}$-degenerate. Let $d(\ell)$ be the minimum integer such that every graph with no $K_\ell$-minor is $d(\ell)$-degenerate. \citet{Kostochka84} and \citet{Thomason01,Thomason84} independently proved that $d(\ell)\in\Theta(\ell\sqrt{\log \ell})$.

\begin{theorem}[\citet{Kostochka84,Thomason01,Thomason84}]\thmlabel{DegreeMinor}
Every graph with no $K_\ell$-minor is $d(\ell)$-degenerate, where $d(\ell)\leq c\ell\sqrt{\log \ell}$ for some constant $c$.
\end{theorem}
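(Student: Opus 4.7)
The plan is to prove the contrapositive in density form: every graph with average degree at least $c'\ell\sqrt{\log\ell}$, for some constant $c'$, contains $K_\ell$ as a minor. This is equivalent to the degeneracy statement, because a graph that is not $d$-degenerate contains a subgraph in which every vertex has degree exceeding $d$, and hence that subgraph has average degree exceeding $d$. After this reduction, I may pass further to a subgraph of minimum degree at least half the average degree, so I will assume throughout that $\delta(G) \geq d := c'\ell\sqrt{\log\ell}/2$.

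The strategy, following Thomason, is first to extract a small dense minor and then to carve it into branch sets. For the first step, I would iteratively contract short connected subgraphs of $G$ to produce a minor $H$ on $m = O(\ell^2\log\ell)$ vertices with constant edge density, i.e.\ $|E(H)| \geq \alpha m^2$ for some absolute $\alpha>0$. The bookkeeping uses that contracting a small connected piece in a graph of minimum degree $d$ reduces the vertex count by a controlled amount while losing only a bounded fraction of edges, so one can shrink until the edge-to-vertex-pair ratio is bounded below. For the second step, randomly partition $V(H)$ into $\ell$ parts $B_1,\dots,B_\ell$ of near-equal size. The goal is to show that with positive probability (i) each $B_i$ induces a connected subgraph of $H$, and (ii) each pair $(B_i,B_j)$ is joined by at least one $H$-edge. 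The contracted sets $B_i$ then form the branch sets of a $K_\ell$-minor in $G$.

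The main obstacle is obtaining the factor $\sqrt{\log\ell}$ rather than $\log\ell$. A naive union bound over the $\binom{\ell}{2}$ pair-events forces branch sets of size $\Omega(\log\ell)$, which only yields $d = O(\ell\log\ell)$. The Kostochka--Thomason improvement sharpens this either by applying the Lov\'asz Local Lemma, exploiting that the bad events for different pairs $(i,j)$ have limited mutual dependence, or by Thomason's more combinatorial greedy scheme, which grows each branch set one at a time while maintaining dense connections to the remaining vertices and amortizing the cost over the abundant edge structure of $H$. The delicate point is balancing the two competing demands: the branch sets must be large enough to be connected in a random subgraph of $H$, yet small enough that $\ell$ of them fit inside $m = O(\ell^2\log\ell)$ vertices while still guaranteeing every pairwise edge. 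Getting this tradeoff tight is precisely where the $\sqrt{\log\ell}$ factor arises, and it is the crux of the argument.
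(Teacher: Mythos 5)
First, a point of reference: the paper does not prove this statement. \thmref{DegreeMinor} is the Kostochka--Thomason theorem, imported as a black box from \citep{Kostochka84,Thomason84,Thomason01} and used only as an ingredient in the proof of \thmref{GridLikeMinor}. So there is no in-paper proof to compare yours against, and your attempt must stand on its own as a proof of a known and genuinely difficult theorem. Your opening reduction (degeneracy $\leftrightarrow$ average degree $\leftrightarrow$ minimum degree) is correct, and the two-step architecture you describe --- extract a small constant-density minor, then randomly partition it into branch sets --- is indeed the architecture of the known proofs. But what you have written is a roadmap, not a proof: you yourself locate the crux (``getting this tradeoff tight is precisely where the $\sqrt{\log\ell}$ factor arises'') and then do not carry it out. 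That is a genuine gap, not a deferrable detail.

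Two concrete problems beyond the missing crux. (i) Your diagnosis of where $\sqrt{\log\ell}$ comes from is off. For two random $s$-subsets of a graph with edge density $\alpha$, the probability that no edge joins them decays like $(1-\alpha)^{\Theta(s^2)}$ --- the exponent is \emph{quadratic} in $s$ --- so the naive union bound over the $\binom{\ell}{2}$ pair-events already succeeds with $s=\Theta(\sqrt{\log\ell})$; it does not force $s=\Omega(\log\ell)$, and the Local Lemma is not what buys the improvement. The genuine difficulties are the connectivity of the random parts, whose failure probability decays only like $e^{-\Theta(s)}$ and must be handled by a separate device (discarding bad parts, or growing parts greedily as Kostochka and Thomason do), together with step one. (ii) The parameters of step one do not fit the argument. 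If the branch sets are to have size $s=\Theta(\sqrt{\log\ell})$, the dense host $H$ must have only $m=\ell s=\Theta(\ell\sqrt{\log\ell})$ vertices, not $O(\ell^2\log\ell)$; and producing a constant-density minor on that few vertices from a graph that is merely $d$-regular (say an expander, which has no dense subgraphs at all) is itself a nontrivial lemma. The standard route is to pass to a graph that is minor-minimal subject to $e(H)\geq c\,v(H)$, deduce from minimality that $\delta(H)\geq c$, that some vertex has degree at most about $2c$, and that adjacent vertices have about $c$ common neighbours, and then work inside the neighbourhood of that single vertex. None of this bookkeeping appears in your write-up, so the theorem is not established. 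If you intend to use this result the way the paper does --- as a cited tool --- that is perfectly legitimate; but as a proof, the argument is incomplete precisely at its quantitative heart.
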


Let $G$ be a graph. Two subgraphs $X$ and $Y$ of $G$ \emph{touch} if $X\cap Y\neq\emptyset$ or there is an edge of $G$ between $X$ and $Y$. A \emph{bramble} in $G$ is a set of pairwise touching connected subgraphs. The subgraphs are called \emph{bramble elements}. A set $S$ of vertices in $G$ is a \emph{hitting set} of a bramble \B\ if $S$ intersects every element of \B. The \emph{order} of \B\ is the minimum size of a hitting set. The canonical example of a bramble of order $\ell$ is the set of crosses (union of a row and column) in the $\ell\times\ell$ grid. The following `Treewidth Duality Theorem' shows the intimate relationship between treewidth and brambles.

\begin{theorem}[\citet{SeymourThomas-JCTB93}]
\thmlabel{TreewidthBramble}
A graph $G$ has treewidth at least $\ell$ if and only if $G$ contains a bramble of order at least $\ell+1$.
\end{theorem}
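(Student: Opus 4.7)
The plan is to prove the two directions separately. The easy direction---``bramble of order $\ell+1$ forces treewidth at least $\ell$''---follows from the Helly property for subtrees of a tree, while the converse is the deep content and requires a genuine min-max argument. For the easy direction, take an arbitrary tree decomposition $(T,(B_t)_{t\in V(T)})$ of $G$ and a bramble \B\ of order at least $\ell+1$. For each $H\in\B$ the set $T_H:=\{t\in V(T):V(H)\cap B_t\neq\emptyset\}$ is a subtree of $T$: single vertices induce subtrees of bags, and the connectedness of $H$ lets any path in $H$ be tracked through a sequence of bags gluing the individual vertex-subtrees together. If $H_1,H_2\in\B$ touch then either they share a vertex (whose vertex-subtree sits in both $T_{H_1}$ and $T_{H_2}$) or they are joined by an edge $uv$, in which case the tree-decomposition axioms supply a bag containing both endpoints; either way $T_{H_1}\cap T_{H_2}\neq\emptyset$. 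By the Helly property for subtrees of a tree, the family $\{T_H:H\in\B\}$ then has a common node $t^*$, and the bag $B_{t^*}$ is a hitting set for \B, forcing $|B_{t^*}|\geq\ell+1$ and hence width at least $\ell$.

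For the hard direction I would argue the contrapositive: assume every bramble of $G$ has order at most $\ell$, and construct a tree decomposition of width at most $\ell-1$. The cleanest route is via the cops-and-robbers game on $G$, in which $\ell$ cops (placed on and teleported between vertices) chase a fugitive who sees the cops and runs arbitrarily quickly along cop-free paths. There are two steps: (i)~translate a winning $\ell$-cop strategy into a tree decomposition of width $\ell-1$ by reading off the history of cop positions along the game tree; and (ii)~convert a winning evasion strategy for the fugitive into a \emph{haven} of order $\ell+1$---a monotone choice, for every set $X\subseteq V(G)$ with $|X|\leq\ell$, of a component $\beta(X)$ of $G-X$ with $\beta(Y)\subseteq\beta(X)$ whenever $X\subseteq Y$---and thence into a bramble of order $\ell+1$ by taking the subgraphs $G[\beta(X)]$ over a suitable family of $X$.

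The main obstacle is step (ii), specifically the ``touching'' requirement for the purported bramble elements. Monotonicity of the haven yields $\beta(X_1\cup X_2)\subseteq\beta(X_1)\cap\beta(X_2)$ whenever $|X_1\cup X_2|\leq\ell$, but this does not automatically force disjoint $\beta(X_1)$ and $\beta(X_2)$ to be joined by an edge. The resolution, which is the technical heart of Seymour and Thomas's argument, is an uncrossing step exploiting submodularity of the vertex-cut function to reduce to minimal separators, together with a careful choice of bramble representatives that glue adjacent components through their shared boundary vertices. Step (i) is more routine but still needs care to ensure the bags produced from the game tree satisfy the connectivity axiom of a tree decomposition, and to show by induction on $|V(G)|$ that the absence of a bramble of order $\ell+1$ allows the cops to begin such a winning recursion via a separator of size at most $\ell$ whose removal leaves no side large enough to support a haven of order $\ell+1$.
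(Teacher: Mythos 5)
The paper does not actually prove this statement; it is quoted from \citet{SeymourThomas-JCTB93} with a pointer to \citet{BD-CPC02} for an alternative proof, so there is no in-paper argument to compare against. That said, your sketch is sound in outline. The easy direction via the Helly property for subtrees of a tree is correct and standard: the key observations that each bramble element traces a subtree of $T$ and that touching elements trace intersecting subtrees are exactly right, and Helly then hands you a bag hitting every element. For the hard direction your plan follows the original Seymour--Thomas route through the cops-and-robber game and havens, and you correctly identify where the real work lies: passing from a robber escape strategy (or a haven) to a genuine bramble, and ensuring that a winning cop strategy can be taken monotone so that the game tree yields a valid tree decomposition. These two points are the heart of the min-max theorem and are not filled in here, which is reasonable for a sketch of a deep result, but worth being explicit that the ``determinacy gives a cop strategy, read off a decomposition'' step silently assumes monotonicity and that the haven-to-bramble conversion needs the uncrossing you mention. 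You might also note that the \citet{BD-CPC02} proof the paper cites avoids the game-theoretic detour entirely: it is a short direct induction producing a width-$(\ell-1)$ tree decomposition from the assumption that no bramble has order $\ell+1$, and is substantially more elementary than the cops-and-robbers machinery while proving exactly the same statement.
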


See \citep{BD-CPC02} for an alternative proof of \thmref{TreewidthBramble}. In light of \thmref{TreewidthBramble}, \thmref{GridMinor} says that every bramble of large order contains a large grid minor, and \thmref{GridLikeMinor} says that every bramble of polynomial order contains a large grid-like-minor.

\section{Main Proofs}

In this section we prove \twothmref{GridLikeMinor}{CartProd}. Let
$e:=2.718\ldots$ and $[n]:=\{1,2,\dots,n\}$. The following lemma is by
\citet*{BBR07}; we include the proof for completeness.

\begin{lemma}[\citet{BBR07}]
  \lemlabel{BramblePath} Let \B\ be a bramble in a graph $G$. Then $G$
  contains a path that intersects every element of \B.
\end{lemma}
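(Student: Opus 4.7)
The plan is an extremal argument. Let $P$ be a path in $G$ meeting the maximum possible number of elements of $\B$, and write $\B_P \subseteq \B$ for the set of elements that $P$ meets. I aim to show $\B_P = \B$, so suppose for contradiction that some $B \in \B$ is disjoint from $V(P)$.

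The bramble axioms supply the needed structural input: $B$ is connected and touches every $B' \in \B_P$, so for each such $B'$ there is a path in $G[B' \cup B]$ from any $v \in V(P) \cap B'$ to some vertex of $B$ -- take a $v$-to-intersection-vertex path inside $B'$ when $B' \cap B \neq \emptyset$, or a $v$-to-$x$ path inside $B'$ followed by an edge $xy$ when $x \in B'$, $y \in B$. I would splice one of these detours into $P$ to produce a path $P^*$ meeting $\B_P \cup \{B\}$, contradicting the maximality of $P$.

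The cleanest splice attaches the detour at an endpoint. To enable this, refine the choice of $P$ by additionally requiring $|V(P)|$ to be minimum among all max-meeting paths. Then each endpoint $u$ must lie in some element $B^* \in \B_P$, since otherwise trimming $u$ would preserve the hit-count while shrinking $|V(P)|$. Pick such an endpoint $u \in B^*$ and choose a shortest path $Q$ in $G[B^* \cup B]$ from $u$ to a vertex $w \in B$; the candidate contradiction is $P + Q$, which reaches $B$ while retaining all of $V(P)$ and hence meets $\B_P \cup \{B\}$.

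The main obstacle is simplicity: the interior of $Q$ lies in $B^*$ and may re-enter $V(P)$, so $P + Q$ need not be a simple path. I would resolve this by truncating $Q$ at its last vertex $q$ in $V(P)$, yielding a segment $Q' = Q[q, w]$ whose interior avoids $V(P)$; then $P[u', q] + Q'$ (with $u'$ the other endpoint of $P$) is a simple path reaching $B$. The delicate step is verifying that no element of $\B_P$ is lost when discarding the subpath $P(q, u]$ -- that is, that no $B_\ell \in \B_P$ has all of its $V(P)$-vertices confined to $P(q, u]$. This loss-accounting is the technical heart of the argument, to be handled by exploiting the minimality of $|V(P)|$ and the fact that $P(q, u]$ lies inside $B^*$, either by direct casework or by iterating the construction on the new, strictly shorter path until $q = u$ and no truncation is needed.
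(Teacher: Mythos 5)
Your extremal setup (maximize the number of bramble elements hit, then minimize $|V(P)|$) is the same as the paper's, but you extract only a weak consequence of the minimality of $|V(P)|$, and this is exactly what creates the ``truncation/loss-accounting'' problem that you identify but do not resolve. You conclude that each endpoint $u$ lies in \emph{some} $B^* \in \B_P$. The minimality of $|V(P)|$ actually gives you more: there must be a bramble element $X$ with $X \cap V(P) = \{u\}$, i.e.\ whose \emph{only} intersection with $P$ is the endpoint $u$. (If every bramble element containing $u$ also met $P - u$, then deleting $u$ would preserve the hit-count and shorten $P$.) Your weaker statement is not enough, because $B^*$ may meet $P$ in many vertices, which is precisely why your detour $Q$ through $B^*$ can re-enter $V(P)$.

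Once you use the stronger fact, the whole difficulty disappears: build $Q$ from $u$ through $X$ to the missing element $B$ (either to a vertex of $X \cap B$, or across a single edge from $X$ to $B$). Every internal vertex of $Q$ lies in $X \setminus \{u\}$, hence outside $V(P)$ since $X \cap V(P) = \{u\}$; the last vertex lies in $B$, hence outside $V(P)$ since $B \cap V(P) = \emptyset$. So $Q \cap P = \{u\}$ automatically, $P \cup Q$ is a simple path hitting $\B_P \cup \{B\}$, and you are done --- no truncation, no loss accounting, no iteration. As written, your argument has a genuine gap: the loss-accounting step is acknowledged but not carried out, and the supporting claim that $P(q,u]$ lies inside $B^*$ is unjustified (there is no reason the portion of $P$ beyond $q$ stays in $B^*$; $q$ is just some vertex of $P$ that $Q$ happens to revisit). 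The proposed ``iterate until $q=u$'' fallback is also not clearly terminating or sound, since the new path need not be strictly shorter nor hit strictly more elements.
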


\begin{proof}
  Let $P$ be a path in $G$ that (1) intersects as many elements of \B\
  as possible, and (2) is as short as possible. Let $v$ be an endpoint
  of $P$. There is a bramble element $X$ that only intersects $P$ at
  $v$, as otherwise we could delete $v$ from $P$. Suppose on the
  contrary that $P$ does not intersect some bramble element $Z$. Since
  $X$ and $Z$ touch, there is a path $Q$ starting at $v$ through $X$
  to some vertex in $Z$, and $Q\cap P=\{v\}$. Thus $P\cup Q$ is a path
  that also hits $Z$. This contradiction proves that $P$ intersects
  every element of \B.
\end{proof}

\begin{lemma}
  \lemlabel{ManyPaths} Let $G$ be a graph containing a bramble
  $\mathcal{B}$ of order at least $k\ell$ for some integers
  $k,\ell\geq1$. Then $G$ contains $\ell$ disjoint paths
  $P_1,\dots,P_\ell$, and for distinct $i,j\in[\ell]$, $G$ contains
  $k$ disjoint paths between $P_i$ and $P_j$.
\end{lemma}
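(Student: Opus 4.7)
The plan is to apply \lemref{BramblePath} to obtain a single path $P^*$ meeting every element of \B, partition $P^*$ into $\ell$ vertex-disjoint subpaths $P_1,\ldots,P_\ell$ of exactly $k$ vertices each, and then verify the required connectivity between each pair via a Menger-style argument that exploits the bramble's pairwise-touching property.

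Applying \lemref{BramblePath} to \B\ yields a path $P^*$ in $G$ meeting every element of \B, and since $V(P^*)$ is a hitting set of \B\ we have $|V(P^*)|\geq k\ell$; assume for simplicity that equality holds. Label the vertices of $P^*$ consecutively as $v_1,v_2,\ldots,v_{k\ell}$ and define $P_m:=v_{(m-1)k+1}v_{(m-1)k+2}\cdots v_{mk}$ for $m\in[\ell]$; these are $\ell$ pairwise vertex-disjoint subpaths of $P^*$, each on exactly $k$ vertices, whose union is $V(P^*)$. To prove that $G$ contains $k$ vertex-disjoint paths between $V(P_i)$ and $V(P_j)$ for each pair $i\neq j$, by Menger's theorem it is enough to show that no set $S\subseteq V(G)$ with $|S|\leq k-1$ separates $V(P_i)$ from $V(P_j)$ in $G$; I argue by contradiction.

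Suppose such an $S$ exists. Since elements of \B\ pairwise touch and are individually connected, any two of them disjoint from $S$ share a vertex or an edge in $G-S$, and hence lie in the same connected component of $G-S$; call this common component $C_0$. Because $S$ separates $V(P_i)$ from $V(P_j)$, at most one of these two sets can meet $C_0$, so without loss of generality $V(P_j)\cap C_0=\emptyset$. It follows that every $X\in\B$ meeting $V(P_j)$ must also meet $S$, for otherwise $X\subseteq C_0$ would force $X\cap V(P_j)=\emptyset$. Now set $T:=S\cup\bigcup_{m\neq j}V(P_m)$. Every $X\in\B$ meets $V(P^*)=\bigcup_m V(P_m)$, and either $X$ meets some $V(P_m)$ with $m\neq j$ (and so $X\cap T\neq\emptyset$) or $X$ meets only $V(P_j)$ among the parts (and then $X\cap S\neq\emptyset$, giving $X\cap T\neq\emptyset$). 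Thus $T$ is a hitting set of \B\ of size at most $(k-1)+(\ell-1)k=k\ell-1$, contradicting the hypothesis that the order of \B\ is at least $k\ell$. Hence no such $S$ exists, and Menger's theorem delivers the required $k$ disjoint paths.

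The main obstacle is arranging the tight counting: the choice $|V(P_m)|=k$ together with $|S|\leq k-1$ makes $|T|\leq k\ell-1$, exactly one less than the order, and this is what makes the argument go through on the nose. The essential structural ingredient is the bramble's pairwise-touching property, which forces all elements of \B\ disjoint from $S$ into a single component $C_0$ of $G-S$; without this, there would be no way to convert the separator condition into a hitting-set condition.
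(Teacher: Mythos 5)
Your proof takes a genuinely different route from the paper's: instead of constructing, for each $P_i$, a sub-bramble $\B_i$ of order $k$ and then arguing that a small separator misses an element of $\B_i$ and an element of $\B_j$ (which must touch), you try to turn a putative small separator $S$ into a small hitting set $T := S\cup\bigcup_{m\neq j}V(P_m)$ of the whole bramble. The component argument (all bramble elements avoiding $S$ lie in a single component $C_0$ of $G-S$) is correct and is a nice observation. However, there is a genuine gap in the step ``assume for simplicity that $|V(P^*)|=k\ell$.''

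This is not a harmless simplification. A path hitting every bramble element need not have only $k\ell$ vertices: for instance, on the $6$-cycle $v_1\dots v_6$ with bramble $\{v_1v_2v_3,\,v_3v_4v_5,\,v_5v_6v_1\}$, the order is $2$, but one checks directly that no path on $2$ vertices is a hitting set (every edge misses one of the three elements), so the shortest hitting path has $3$ vertices. In general, $|V(P^*)|$ can strictly exceed the order, and one cannot trim $P^*$ to a hitting subpath of exactly $k\ell$ vertices. Your argument fundamentally needs $\sum_m|V(P_m)|=|V(P^*)|$ (so that ``every $X\in\B$ meets $\bigcup_m V(P_m)$'' holds) and, for every $j$, $\sum_{m\neq j}|V(P_m)|\leq k(\ell-1)$. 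Summing the latter over all $j$ forces $|V(P^*)|\leq k\ell$, i.e.\ exact equality. So the assumption is not WLOG, and without it the $T$ you build is no longer a hitting set, or is too large. The paper avoids this precisely by defining the $P_i$ greedily via sub-brambles $\B_i$ of order $k$: the partition then need not cover all of $P^*$, and the final contradiction is that the two surviving elements $X\in\B_i$, $Y\in\B_j$ touch across $S$, not that $\B$ has a small hitting set. You would need to replace the uniform partition with something like the paper's greedy construction (or find another way to control the path length) for the argument to close.
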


\begin{proof}
  By \lemref{BramblePath}, there is a path $P=(v_1,\dots,v_n)$ in $G$
  that intersects every element of \B. For $1\leq a\leq b\leq n$, let
  \C{a,b} be the sub-path of $P$ induced by $\{v_a,\dots,v_b\}$, and
  let $\BB{a,b}$ be the
  sub-bramble $$\BB{a,b}:=\{X\in\B:X\cap\C{a,b}\neq\emptyset,\,X\cap\C{1,a-1}=\emptyset\}\enspace.$$
  If $S$ is a hitting set of $\BB{a,b}$, then $S\cup\{v_{b+1}\}$ is a
  hitting set of $\BB{a,b+1}$.  Thus the order of $\BB{a,b+1}$ is at
  most the order of $\BB{a,b}$ plus 1.  Hence for each $a\in[n]$,
  either the order of $\BB{a,n}$ is less than $k$, or for some $b\geq
  a$ the order of $\BB{a,b}$ equals $k$.  Thus there are positive
  integers $a_1<a_2<\dots<a_s\leq n$ such that for each $i\in[s]$ the
  order of $\B_i:=\BB{a_{i-1}+1,a_i}$ equals $k$ (where $a_0=0$), and
  the order of $\B_{s+1}:=\BB{a_s+1,n}$ is less than $k$.  Since
  $\B=\B_1\cup\dots\cup\B_{s+1}$, the order of \B\ is at most
  the sum of the orders of $\B_1,\dots,B_{s+1}$, which is
  strictly less than $(s+1)k$.  Since the order of \B\ is at least
  $k\ell$, we have $s\geq\ell$.  Let $P_i:=\C{a_{i-1}+1,a_i}$ for
  $i\in[\ell]$.  Thus $P_1,\dots,P_\ell$ are disjoint paths in $G$.

  Suppose that there is a set $S\subseteq V(G)$ separating some
  pair of distinct paths $P_i$ and $P_j$, where $|S|\leq k-1$. Thus
  $S$ is not a hitting set of $\B_i$, since $\B_i$ has order
  $k$. Hence some element $X\in\B_i$ does not intersect
  $S$. Similarly, some element $Y\in\B_j$ does not intersect $S$. Thus
  $S$ separates $X$ from $Y$, and hence $X$ and $Y$ do not touch. This
  contradiction proves that every set of vertices separating $P_i$ and
  $P_j$ has at least $k$ vertices. By Menger's Theorem, there are $k$
  disjoint paths between $P_i$ and $P_j$, as desired.
\end{proof}


We now prove the main result.

\begin{proof}[Proof of the first part of \thmref{GridLikeMinor}.]
  Let
  $k:=\lceil{4e\tbinom{\ell}{2}\,d(\ell)\rceil}$.
  Let $G$ be a graph with treewidth at least $c\ell^4\sqrt{\log\ell}$,
  which is at least $k\ell-1$ for an appropriate value of $c$. By
  \thmref{TreewidthBramble}, $G$ has a bramble of order at least
  $k\ell$.  By \lemref{ManyPaths}, $G$ contains $\ell$ disjoint paths
  $P_1,\dots,P_\ell$, and for distinct $i,j\in[\ell]$, $G$ contains a
  set $\mathcal{Q}_{i,j}$ of $k$ disjoint paths between $P_i$ and
  $P_j$.

  For distinct $i,j\in[\ell]$ and distinct $a,b\in[\ell]$ with
  $\{i,j\}\neq\{a,b\}$, let $H_{i,j,a,b}$ be the intersection graph of
  $\Q_{i,j}\cup\Q_{a,b}$. Since $H_{i,j,a,b}$ is bipartite, if
  $K_\ell$ is a minor of $H_{i,j,a,b}$, then $\Q_{i,j}\cup\Q_{a,b}$ is
  a grid-like-minor of order $\ell$. Now assume that $K_\ell$ is not a
  minor of $H_{i,j,a,b}$. By \thmref{DegreeMinor}, $H_{i,j,a,b}$ is
  $d(\ell)$-degenerate.

  Let $H$ be the intersection graph of $\cup\{\Q_{i,j}:1\leq
  i<j\leq\ell\}$; that is, $H$ is the union of the $H_{i,j,a,b}$. Then
  $H$ is $\binom{\ell}{2}$-colourable, where each colour class is some
  $\Q_{i,j}$.  Each colour class of $H$ has $k$ vertices, and each
  pair of colour classes in $H$ induce a $d(\ell)$-degenerate
  subgraph. By \lemref{ChooseIndependentDegen} (in the following
  section) with $n=k$ and $r=\binom{\ell}{2}$ and $d=d(\ell)$, $H$ has
  an independent set with one vertex from each colour class. That is,
  in each set $\Q_{i,j}$ there is one path $Q_{i,j}$ such that
  $Q_{i,j}\cap Q_{a,b}=\emptyset$ for distinct pairs $i,j$ and
  $a,b$. Consider the set of paths
$$\PP:=\{P_i:i\in[\ell]\}\cup\{Q_{i,j}:1\leq i<j\leq\ell\}.$$ 
The intersection graph of \PP\ is bipartite and contains the
1-subdivision of $K_\ell$, which contains a $K_\ell$-minor. Therefore \PP\ is a
grid-like-minor of order $\ell$ in $G$.
\end{proof}

The next lemma with $r=2$ implies that if a graph $G$ contains a
grid-like-minor of order $\ell$, then the treewidth of $G$ is at least
$\ceil{\frac{\ell}{2}}-1$, which is the second part of
\thmref{GridLikeMinor}.

\begin{lemma}
  Let $H$ be the intersection graph of a set \X\ of connected
  subgraphs in a graph $G$. If $H$ contains a $K_\ell$-minor, and $H$
  contains no $K_{r+1}$-subgraph, then the treewidth of $G$ is at
  least $\ceil{\frac{\ell}{r}}-1$.
\end{lemma}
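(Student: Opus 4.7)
The plan is to build a bramble in $G$ of order at least $\ceil{\ell/r}$ out of the $K_\ell$-minor in $H$, and then conclude via \thmref{TreewidthBramble}.

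First I would translate the $K_\ell$-minor in $H$ into a collection of pairwise touching connected subgraphs of $G$. Let $V_1,\dots,V_\ell\subseteq V(H)$ be pairwise disjoint branch sets of a $K_\ell$-minor in $H$, so that each induced subgraph $H[V_i]$ is connected and there is an $H$-edge between every pair $V_i,V_j$. For each $i\in[\ell]$ define
$$B_i\;:=\;\bigcup_{X\in V_i} X,$$
regarded as a subgraph of $G$. Because every edge of $H$ witnesses a nonempty intersection of the corresponding members of \X\ in $G$, the connectedness of $H[V_i]$ propagates: $B_i$ is a connected subgraph of $G$. The same observation applied to an $H$-edge between $V_i$ and $V_j$ gives $B_i\cap B_j\neq\emptyset$, so $\B:=\{B_1,\dots,B_\ell\}$ is a bramble in $G$.

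Next I would bound the order of $\B$ from below using the hypothesis that $H$ has no $K_{r+1}$. For any vertex $v\in V(G)$, let $\X_v:=\{X\in\X:v\in X\}$. Any two members of $\X_v$ share the vertex $v$, so $\X_v$ induces a clique in $H$; therefore $|\X_v|\le r$. Because the branch sets $V_1,\dots,V_\ell$ are pairwise disjoint, the vertex $v$ lies in $B_i$ only when $\X_v$ meets $V_i$, which happens for at most $|\X_v|\le r$ indices $i$. Consequently any hitting set $S$ of $\B$ satisfies $r|S|\ge \ell$, and so $\B$ has order at least $\ceil{\ell/r}$. Invoking \thmref{TreewidthBramble} gives treewidth at least $\ceil{\ell/r}-1$, as required.

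There is no real obstacle here: the only thing to be careful about is the distinction between intersection and touching. The construction gives pairwise intersecting (hence pairwise touching) subgraphs directly from $H$-edges, and the $K_{r+1}$-freeness is exactly what is needed to limit how many branch sets a single vertex can hit. The lemma then follows immediately, and the second part of \thmref{GridLikeMinor} is the case $r=2$ since the intersection graph of a grid-like-minor is bipartite and hence triangle-free.
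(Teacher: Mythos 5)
Your proof is correct and follows essentially the same route as the paper: build a bramble $\{B_1,\dots,B_\ell\}$ in $G$ from the branch sets of the $K_\ell$-minor in $H$, observe that $K_{r+1}$-freeness of $H$ forces each vertex of $G$ to lie in at most $r$ bramble elements, and conclude via \thmref{TreewidthBramble}. The only cosmetic difference is that you phrase the $K_{r+1}$-freeness step through the clique $\X_v$ induced by a vertex $v$, while the paper states the same bound directly; the argument is identical.
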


\begin{proof}
  Let $H_1,\dots,H_\ell$ be the branch sets of a $K_\ell$-minor in
  $H$. Each $H_i$ corresponds to a subset $\X_i\subseteq\X$, such that
  $\X_i\cap\X_j=\emptyset$ for distinct $i,j\in[\ell]$. Let $G_i$ be
  the subgraph of $G$ formed by the union of the subgraphs in
  $\X_i$. Since $H_i$ is connected and each subgraph in $\X_i$ is
  connected, $G_i$ is connected. For distinct $i,j\in[\ell]$, some
  vertex in $H_i$ is adjacent to some vertex in $H_j$. That is, some
  subgraph in $\X_i$ intersects some subgraph in $\X_j$. Hence $G_i$
  and $G_j$ share a vertex in common, and $\B:=\{G_1,\dots,G_\ell\}$
  is a bramble in $G$. Since $H$ has no $K_{r+1}$-subgraph, every
  vertex of $G$ is in at most $r$ bramble elements of \B. Thus every
  hitting set of \B\ has at least $\ceil{\frac{\ell}{r}}$
  vertices. Hence \B\ has order at least $\ceil{\frac{\ell}{r}}$. By
  \thmref{TreewidthBramble}, $G$ has treewidth at least
  $\ceil{\frac{\ell}{r}}-1$.
\end{proof}

\thmref{CartProd} follows from \thmref{GridLikeMinor} and the next
lemma.

\begin{lemma}
  \lemlabel{MinorCartProd} Let \PP\ be a grid-like-minor in a graph
  $G$.  Then the intersection graph $H$ of \PP\ is a minor of
  \CP{G}{K_2}.
\end{lemma}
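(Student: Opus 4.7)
The plan is to exploit the bipartition of $H$ by placing one color class of paths in each of the two copies of $G$ inside $\CP{G}{K_2}$, and then to use the matching edges between the two copies to realize the adjacencies of $H$.

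More concretely, since $H$ is bipartite, I would fix a $2$-coloring $\PP=\A\cup\B$ with $\A$ and $\B$ independent in $H$. Independence in the intersection graph $H$ means that the paths within $\A$ are pairwise vertex-disjoint in $G$, and likewise for $\B$. For each $P\in\A$ let $P^0$ denote the copy of $P$ inside the layer $V(G)\times\{0\}$ of $\CP{G}{K_2}$, and for each $P\in\B$ let $P^1$ denote the copy of $P$ inside the layer $V(G)\times\{1\}$. I then claim that the family
$$\{P^0:P\in\A\}\,\cup\,\{P^1:P\in\B\}$$
is the set of branch sets of an $H$-minor model in $\CP{G}{K_2}$.

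To verify this, I would check the three standard conditions. First, each $P^i$ is a path, hence connected. Second, two branch sets in the same layer come from paths in the same part of the bipartition, so they are vertex-disjoint in $G$ and hence in that layer; branch sets from different layers are automatically disjoint because the layers are. Third, whenever $P\in\A$ and $Q\in\B$ are adjacent in $H$, they share some vertex $v\in V(G)$, and then $(v,0)\in P^0$ and $(v,1)\in Q^1$, so the matching edge $(v,0)(v,1)$ of $\CP{G}{K_2}$ provides the required edge between the two branch sets. This produces the desired $H$-minor.

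The argument is essentially bookkeeping once the bipartition has been used to decide which layer each path occupies; there is no serious obstacle. The one place care is needed is the disjointness step, which is precisely where the hypothesis that the intersection graph of $\PP$ is bipartite (rather than arbitrary) is used: without it, two paths assigned to the same layer could collide, and the simple layered construction would fail.
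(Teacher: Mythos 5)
Your proof is correct and uses essentially the same construction as the paper: place the paths of one color class in the first copy of $G$ and those of the other in the second, contract each path to a single branch-set vertex, and realize each edge of $H$ via the matching edge at a common vertex. You spell out the verification of the minor-model conditions in more detail, but the underlying idea is identical.
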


\begin{proof}
  Let $\A\cup\B$ be a bipartition of $V(H)$. If $XY\in E(H)$ for some
  $X,Y\in\PP$, then $X\in\A$ and $Y\in\B$, and some vertex $v$ of $G$
  is in $X\cap Y$. Thus in \CP{G}{K_2}, the copy of $v$ in the first
  copy of $G$ is adjacent to the copy of $v$ in the second copy of
  $G$. Thus $H$ is obtained by contracting each path in $\A$ in the
  first copy of $G$, and by contracting each path in $\B$ in the
  second copy of $G$, as illustrated in \figref{Picture}.
\end{proof}

\begin{figure}[!h]
  \includegraphics[width=\textwidth]{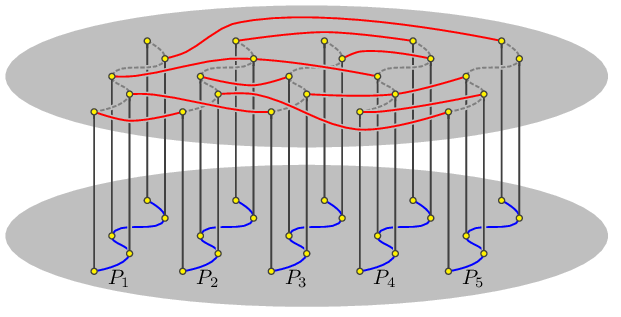}
  \vspace*{-5ex}
  \caption{\figlabel{Picture}Construction of a $K_{\ell}$-minor in
    \CP{G}{K_2}.}
\end{figure}

Note that \lemref{MinorCartProd} generalises as follows: If $H$ is the
intersection graph of a set of connected subgraphs of a graph $G$,
then $H$ is a minor of \CP{G}{K_{\chi(H)}}.

\section{Independent Transversals}

An \emph{independent transversal} in a coloured graph is an
independent set with exactly one vertex in each colour class.  Many
results are known that say that if each colour class is large compared
to the maximum degree and the number of colours, then an independent
transversal exists
\citep{King-JGT,LohSud-JCTB07,Alon-IJM88,Yuster-CPC97,Yuster-DM97,HS-CPCC06,BES-DM75,ST-Comb06}. Here
we prove two similar results, in which the maximum degree assumption
is relaxed. This result is used in the proof of
\thmref{GridLikeMinor}. The proof is based on the Lov\'{a}sz Local
Lemma.

\begin{lemma}[\citet{EL75}]
  \lemlabel{LLL} Let \X\ be a set of events, such that each event in
  \X\ has probability at most $p$ and is mutually independent of all
  but $D$ other events in \X. If $ep(D+1)\leq 1$ then with positive
  probability no event in \X\ occurs.
\end{lemma}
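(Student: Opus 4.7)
The plan is to prove this by the standard inductive argument for the Lov\'asz Local Lemma. For each $A\in\X$, let $\Gamma(A)$ denote the set of at most $D$ events in $\X\setminus\{A\}$ on which $A$ may depend, and set the uniform weight $x:=1/(D+1)$. The hypothesis $ep(D+1)\le 1$, combined with the elementary inequality $\bigl(1-\tfrac{1}{D+1}\bigr)^D\ge 1/e$, yields $p\le x(1-x)^D$. This reformulated inequality is what actually drives the induction below.

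The core step is to prove, by induction on $|S|$, that for every event $A\in\X$ and every subset $S\subseteq\X\setminus\{A\}$,
$$\Pr\Bigl[A \,\Big|\, \bigcap_{B\in S}\overline{B}\Bigr]\le x.$$
The base case $S=\emptyset$ is immediate from $\Pr[A]\le p\le x$. For the inductive step, partition $S=S_1\cup S_2$ with $S_1:=S\cap\Gamma(A)$ and $S_2:=S\setminus\Gamma(A)$, and write the conditional probability as a ratio $N/M$, where $N$ is the probability of $A\cap\bigcap_{B\in S_1}\overline{B}$ conditional on $\bigcap_{C\in S_2}\overline{C}$, and $M$ is the probability of $\bigcap_{B\in S_1}\overline{B}$ conditional on $\bigcap_{C\in S_2}\overline{C}$. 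By mutual independence of $A$ from the events in $S_2$, the numerator satisfies $N\le\Pr[A]\le p$. By iterating the inductive hypothesis on the events of $S_1$ one at a time (each such application uses a strictly smaller conditioning set), the denominator satisfies $M\ge(1-x)^{|S_1|}\ge(1-x)^D$. Hence $N/M\le p/(1-x)^D\le x$, completing the induction.

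Applying the chain rule together with the bound just proved, for any ordering $A_1,\dots,A_n$ of $\X$, gives
$$\Pr\Bigl[\bigcap_{A\in\X}\overline{A}\Bigr]=\prod_{i=1}^{n}\Pr\Bigl[\overline{A_i}\,\Big|\,\overline{A_1}\cap\dots\cap\overline{A_{i-1}}\Bigr]\ge(1-x)^{n}>0,$$
which is the desired conclusion. The one delicate point in the whole argument is the appeal to mutual (rather than pairwise) independence when bounding $N$: since $A$ is mutually independent of every subcollection of $\X\setminus(\Gamma(A)\cup\{A\})\supseteq S_2$, conditioning on any Boolean combination of events in $S_2$ leaves the marginal probability of $A$ unchanged. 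Everything else is routine bookkeeping around the ratio manipulation and the choice of the uniform weight $x=1/(D+1)$.
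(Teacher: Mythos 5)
Your proof is the standard inductive argument for the symmetric Lov\'asz Local Lemma, and it is essentially correct. However, there is nothing in the paper to compare it against: Reed and Wood state \lemref{LLL} as a cited result of Erd\H{o}s and Lov\'asz \citep{EL75} and do not include a proof, so the lemma is being used as a black box. Your write-up is the textbook proof with the uniform weight $x=1/(D+1)$, deriving $p\le x(1-x)^D$ from $ep(D+1)\le 1$ via $(1-\tfrac{1}{D+1})^D\ge 1/e$, then running the induction on $\Pr[A\mid\bigcap_{B\in S}\overline{B}]\le x$ and finishing with the chain rule. The one small place you elide is in bounding the numerator: you need monotonicity ($A\cap\bigcap_{B\in S_1}\overline{B}\subseteq A$) before invoking mutual independence to replace $\Pr[A\mid\bigcap_{C\in S_2}\overline{C}]$ by $\Pr[A]$; you clearly intend this but don't say it. There is also a degenerate edge case at $D=0$ where $x=1$ makes the final bound $(1-x)^n=0$ vacuous; this case must be handled directly (all events mutually independent and $p\le 1/e<1$), but essentially every standard presentation has the same gap, so it is not a substantive flaw. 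In short: correct proof, but the paper itself offers no proof to measure it against.
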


\begin{lemma}
  \lemlabel{ChooseIndependent} Let $V_1,\dots,V_r$ be the colour
  classes in an $r$-colouring of a graph $H$.  For $i\in[r]$, let
  $n_i:=|V_i|$, and let $m_i$ be the number of edges with one endpoint
  in $V_i$.  Suppose that $n_i\geq{2et}$ and $m_i\leq tn_i$ for some
  $t>0$ and for all $i\in[r]$.  Then there exists an independent set
  $\{x_1,\dots,x_r\}$ of $H$ such that each $x_i\in V_i$.
\end{lemma}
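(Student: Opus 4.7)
The plan is to follow the structure of the proof of \lemref{ChooseIndependentDegen} almost verbatim, using two ingredients: a preliminary reduction to the case in which every colour class has the common size $n:=\ceil{2et}$, and then the same random-assignment-plus-\lemref{LLL} argument.

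First I would reduce to $n_i=n$ with $m_i\leq tn$ for every $i$. Process the colour classes in any order and, at the step that handles $V_i$, delete from $V_i$ all but the $n$ vertices of smallest degree in the current graph. Because $H$ has no edges inside any single colour class, the degrees of those surviving vertices are unchanged when we restrict $V_i$ itself, and the standard observation that the average of the $n$ smallest entries of a list is at most the overall average yields that the total of those degrees is at most $(n/n_i)\cdot m_i\leq (n/n_i)\cdot tn_i=tn$. Subsequent restrictions of other classes only remove edges incident to $V_i$, so the inequality $m_i\leq tn$ survives; and the hypothesis $n_i\geq 2et$ guarantees that each class initially has at least $n$ vertices.

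With this uniform reduction in hand, I would independently pick $x_i\in V_i$ uniformly at random for each $i$, and for each edge $vw$ with $v\in V_i$ and $w\in V_j$ let $X_{vw}$ be the event that $v=x_i$ and $w=x_j$, so that $\Pr[X_{vw}]=1/n^2$. Since $X_{vw}$ depends only on $x_i$ and $x_j$, it is mutually independent of every event $X_{v'w'}$ with $\{v',w'\}\cap(V_i\cup V_j)=\emptyset$, and so its dependency degree is at most $m_i+m_j-1\leq 2tn-1$. Applying \lemref{LLL} with $p=1/n^2$ and $D+1=2tn$ then amounts to verifying $e\cdot(1/n^2)\cdot 2tn=2et/n\leq 1$, which holds because $n\geq 2et$. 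Hence with positive probability no $X_{vw}$ occurs, and $\{x_1,\ldots,x_r\}$ is the required independent transversal.

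The main obstacle is the reduction step: unlike $d$-degeneracy in the previous lemma, the inequality $m_i\leq tn_i$ is not automatically preserved under arbitrary deletion from $V_i$, since deleting a low-degree vertex could raise the ratio $m_i/n_i$. The fix is to exploit that $V_i$ is an independent set, so deleting vertices from $V_i$ does not change the degrees of the surviving vertices in $V_i$; this is what makes the ``keep the $n$ lowest-degree vertices'' recipe work. After that, the \lemref{LLL} calculation is essentially identical to the one in \lemref{ChooseIndependentDegen}.
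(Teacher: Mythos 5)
Your proof is correct and follows essentially the same approach as the paper: reduce to $n_i=n:=\ceil{2et}$ for all $i$, then apply the Lov\'asz Local Lemma with dependency degree $D=m_i+m_j-1$ and probability $p=1/n^2$. The only cosmetic difference is in the reduction step---the paper deletes a single maximum-degree vertex and recurses, while you cut each class down to its $n$ lowest-degree vertices in one pass---but both yield $m_i\le tn$ and the rest of the argument is identical.
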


\begin{proof}
  Let $n:=\ceil{2et}$. Suppose that $n_i>n$ for some $i\in[r]$. Some
  vertex $v\in V_i$ has degree at least $\frac{m_i}{n_i}$. Thus
  $\frac{m_i-\deg(v)}{n_i-1}\leq\frac{m_i}{n_i}\leq t$. Hence $H-v$
  satisfies the assumptions. By induction, $H-v$ contains the desired
  independent set. Now assume that $n_i=n$ for all $i\in[r]$.

  For each $i\in[r]$, independently and randomly choose one vertex
  $x_i\in V_i$.  Each vertex in $V_i$ is chosen with probability
  $\frac{1}{n}$.  Consider an edge $vw$, where $v\in V_i$ and $w\in
  V_j$.  Let $X_{vw}$ be the event that both $v$ and $w$ are chosen.
  Thus $X_{vw}$ has probability $p:=\frac{1}{n^2}$.  Observe that
  $X_{vw}$ is mutually independent of every event $X_{xy}$ where
  $x\not\in V_i\cup V_j$ and $y\not\in V_i\cup V_j$.  Thus $X_{vw}$ is
  mutually independent of all but at most $D:=m_i+m_j-1$ other events.

  Now $2em_i\leq 2etn\leq n^2$ and $2em_j\leq 2etn\leq n^2$.  Thus
  $e(m_i+m_j)\leq n^2$.  That is, $ep(D+1)\leq 1$.  By \lemref{LLL},
  with positive probability no event $X_{vw}$ occurs.  Hence there
  exists $x_1,\dots,x_r$ such that no event $X_{vw}$ occurs.  That is,
  $\{x_1,\dots,x_r\}$ is the desired independent set.
\end{proof}

\begin{lemma}
  \lemlabel{ChooseIndependentDegen} Let $V_1,\dots,V_r$ be the colour
  classes in an $r$-colouring of a graph $H$. Suppose that $|V_i|\geq
  4e(r-1)d$ for all $i\in[r]$, and $H[V_i\cup V_j]$ is $d$-degenerate
  for distinct $i,j\in[r]$. Then there exists an independent set
  $\{x_1,\dots,x_r\}$ of $H$ such that each $x_i\in V_i$.
\end{lemma}

\begin{proof}
  Let $n:=\ceil{4e(r-1)d}$. For each $i\in[r]$, we may assume that
  $|V_i|=n$ (since deleting vertices from $V_i$ does not change the
  degeneracy assumption). Let $m_i$ be the number of edges with one
  endpoint in $V_i$. Every $d$-degenerate graph with $N$ vertices has
  at most $dN$ edges. Thus $m_i\leq 2(r-1)dn$. Let $t:=2(r-1)d$. The
  result follows from \lemref{ChooseIndependent} since $n\geq 2et$ and
  each $m_i\leq tn$.
\end{proof}

We now give an example that shows that the lower bound on $|V_i|$ in
\lemref{ChooseIndependentDegen} is best possible up to a constant
factor. Say $V_1$ has $d(r-1)$ vertices. Partition $V_1$ into sets
$W_2,\dots,W_r$ each of size $d$. Connect every vertex in $W_i$ to
every vertex in $V_i$ by an edge. Each bichromatic subgraph (ignoring
isolated vertices) is the complete bipartite graph $K_{d,n}$ (for some
$n$), which is $d$-degenerate. However, since every vertex in $V_1$
dominates some colour class, no independent set has one vertex from
each colour class. It is interesting to determine the best possible
lower bound on the size of each colour class in
\lemref{ChooseIndependentDegen}. It is possible that $|V_i|\geq
d(r-1)+c$ suffices.





\def\cprime{$'$} \def\soft#1{\leavevmode\setbox0=\hbox{h}\dimen7=\ht0\advance
  \dimen7 by-1ex\relax\if t#1\relax\rlap{\raise.6\dimen7
  \hbox{\kern.3ex\char'47}}#1\relax\else\if T#1\relax
  \rlap{\raise.5\dimen7\hbox{\kern1.3ex\char'47}}#1\relax \else\if
  d#1\relax\rlap{\raise.5\dimen7\hbox{\kern.9ex \char'47}}#1\relax\else\if
  D#1\relax\rlap{\raise.5\dimen7 \hbox{\kern1.4ex\char'47}}#1\relax\else\if
  l#1\relax \rlap{\raise.5\dimen7\hbox{\kern.4ex\char'47}}#1\relax \else\if
  L#1\relax\rlap{\raise.5\dimen7\hbox{\kern.7ex
  \char'47}}#1\relax\else\message{accent \string\soft \space #1 not
  defined!}#1\relax\fi\fi\fi\fi\fi\fi}

\end{document}